\newtheorem{thm}{Theorem}[section]
\newtheorem{defn}[thm]{Definition}
\newtheorem{exa}{Example}[section]
\newcommand{\si}{\sigma}
\newcommand{\dif}{\mathrm{d}}
\newcommand{\ba}{\begin{array}}
\newcommand{\ea}{\end{array}}
\begin{document}
\begin{frontmatter}
\numberwithin{equation}{section}
\title{Symplecticity-preserving continuous-stage Runge-Kutta-Nystr\"{o}m methods}
\author[a]{Wensheng Tang}
\ead{tangws@lsec.cc.ac.cn}
\address[a]{School of Mathematics and Statistics,\\
Changsha University of Science and Technology,\\ Changsha $410114$,
China}
\author[b]{Jingjing Zhang\corref{cor1}}
\ead{zhangjj@hpu.edu.cn}\cortext[cor1]{Corresponding author.}
\address[b]{School of Mathematics and Information Science,
\\ Henan Polytechnic University, Jiaozuo $454001$, China}
\author[]{}


\begin{abstract}

We develop continuous-stage Runge-Kutta-Nystr\"{o}m (csRKN) methods
for solving second order ordinary differential equations (ODEs) in
this paper. The second order ODEs are commonly encountered in
various fields and some of them can be reduced to the first order
ODEs with the form of separable Hamiltonian systems. The
symplecticity-preserving numerical algorithm is of interest for
solving such special systems. We present a sufficient condition for
a csRKN method to be symplecticity-preserving, and by using Legendre
polynomial expansion we show a simple way to construct such
symplectic RKN type method.

\end{abstract}

\begin{keyword}
Hamiltonian systems; Symplecticity-preserving; Continuous-stage
Runge-Kutta-Nystr\"{o}m methods; Legendre polynomial; Symplectic
conditions.


\end{keyword}

\end{frontmatter}

\section{Introduction}

It is well-known that Runge-Kutta (RK) methods, partitioned
Runge-Kutta (PRK) methods and Runge-Kutta-Nystr\"{o}m (RKN) methods
paly a central role in the context of numerical solution of ordinary
differential equations (ODEs), and they were well-developed in the
previous investigations
\cite{butcher87tna,hairernw93sod,hairerw96sod}.

More recently, numerical methods with infinitely many stages
including continuous-stage Runge-Kutta (csRK) method,
continuous-stage partitioned Runge-Kutta (csPRK) method have been
investigated and discussed in
\cite{hairer10epv,Tangs12ana,Tangs12tfe,Tangs14cor,miyatake14aee,
Tanglx15cos,butcherm15aco}. It is found that based on such methods
we can obtain many classical RK methods and PRK methods of
arbitrarily high order by using quadrature formulae but without
resort to solving the tedious nonlinear algebraic equations that
stem from the order conditions with many unknown coefficients. The
construction of continuous-stage numerical methods seems more easier
than that of those classical methods, since the associated Butcher
tableau coefficients belong to the space of continuous functions and
they can be treated in use of orthogonal polynomial expansions
\cite{Tangs14cor,Tanglx15cos}. Moreover, as shown in
\cite{Tangs14cor,Tanglx15cos}, numerical methods serving some
special purpose including symplecticity-preserving methods for
Hamiltonian systems, symmetric methods for reversible systems,
energy-preserving methods for Hamiltonian systems, numerical methods
with conjugate symplecticity (up to a finite order) for Hamiltonian
systems can also be constructed and investigated based on such new
framework.

It is worth mentioning that some methods with special purpose
couldn't possibly exist in the classical context of numerical
methods but it does within the new framework. For instance,
\cite{Celledoni09mmoqw} has proved that there is no
energy-preserving RK methods for general Hamiltonian system
excluding those polynomial system, but energy-preserving methods
based on csRK obviously exist
\cite{hairer10epv,quispelm08anc,brugnanoit10hbv,Tangs12ana,
Tangs12tfe,Tangs14cor,miyatake14aee,butcherm15aco}. It is also found
that some Galerkin variational methods can be related to
continuous-stage (P)RK methods, which can not be completely
explained in the classical (P)RK framework
\cite{Tangs12tfe,Tangs15sdg,Tangsc15dgm}. As a consequence, the
continuous-stage methods provide a new broader scope for numerical
solution of ODEs and they are worth further investigating.

As is well known, the second order ODEs are commonly encountered in
various fields including celestial mechanics, molecular dynamics,
biological chemistry and so on
\cite{hairernw93sod,sanzc94nhp,hairerlw06gni}. In this paper, we are
going to develop continuous-stage RKN (csRKN) methods for solving
second order ODEs. In particular, there is a number of second order
ODEs that can be reduced to the first order ODEs with the form of
separable Hamiltonian systems, and the symplecticity-preserving
discretization for such systems is of considerable interest
\cite{Feng84ods,sanzc94nhp,hairerlw06gni}. For this sake, we will
present a sufficient condition for a csRKN method to be
symplecticity-preserving, and then show the construction of
symplectic RKN type methods by using the Legendre polynomial
expansion technique.

The outline of this paper is as follows. In the next section, we
introduce the so-called csRKN methods for solving second order ODEs.
After that we present the corresponding symplectic conditions and
the order conditions, then we use the orthogonal polynomial
expansion technique to construct symplecticity-preserving csRKN
methods, which will be given in section 3-4. Section 5 is devoted to
discuss the construction of diagonally implicit symplectic methods.
At last, the concluding remarks will be given.

\section{Continuous-stage RKN method}

In the field of engineering and physics there are a large class of
problems which can be expressed by a system of second order
differential equations
\begin{equation}\label{eq:second}
\ddot{q}=f(t, q),\;q\in \mathbb{R}^d,
\end{equation}
where the double dots on $q$ represent the second-order derivative
with respect to $t$ and
$f:\mathbb{R}\times\mathbb{R}^{d}\rightarrow\mathbb{R}^{d}$ is a
sufficiently smooth vector function.

For system \eqref{eq:second}, the often used treatment is to write
it as a first order differential system by introducing $p=\dot{q}$,
namely
\begin{equation}\label{eq:first}
\begin{cases}
\dot{q}=p,\\[2pt]
\dot{p}=f(t, q).
\end{cases}
\end{equation}

As presented in \cite{Tanglx15cos}, by using a continuous-stage
partitioned Runge-Kutta (csPRK) method to \eqref{eq:first}, it gives
\begin{subequations}
\begin{alignat}{2}
\label{eq:cs1}
&Q_\tau = q_n +h\int_{0}^{1} A_{\tau,\si} P_{\si} d\si, \quad\tau \in[0, 1], \\
\label{eq:cs2}
&P_\tau = p_n +h\int_{0}^{1} \hat{A}_{\tau,\si}
f(t_n+C_\si h, Q_\si) d \si, \quad\tau \in[0, 1], \\
\label{eq:cs3}
&q_{n+1} = q_n +h\int_{0}^{1} B_\tau P_\tau d\tau, \quad n\in \mathbb{N},\\
\label{eq:cs4} &p_{n+1} = p_n +h\int_{0}^{1} \hat{B}_\tau
f(t_n+C_\tau h, Q_\tau) d\tau,\quad n\in \mathbb{N},
\end{alignat}
\end{subequations}
where $A_{\tau, \si},\,\hat{A}_{\tau, \si}$ are functions of two
variables $\tau, \si\in[0,1]$ and $B_\tau,\;\hat{B}_\tau,\;C_\tau$
are functions of $\tau\in[0,1]$. We call $Q_\tau$ and $P_\tau$ the
internal continuous stages. In addition, here we assume that
$\int_{0}^{1} A_{\tau,\si} d\si=\int_{0}^{1} \hat{A}_{\tau,\si}
d\si=C_\tau$, and $\int_{0}^{1}B_\tau d\tau=\int_{0}^{1}\hat{B}_\tau
d\tau=1$.

By inserting \eqref{eq:cs2} into \eqref{eq:cs1}, we derive
\begin{align}
Q_\tau& = q_n +h\int_{0}^{1} A_{\tau,\si}\big( p_n +h\int_{0}^{1}
\hat{A}_{\si, \rho} f(t_n+C_\rho h, Q_\rho) d \rho\big)  d\si \\
&=q_n +hC_\tau p_n +h^2\int_{0}^{1} \bar{A}_{\tau, \rho}
f(t_n+C_\rho h, Q_\rho) d \rho,
 \end{align}
where we define $\bar{A}_{\tau,
\rho}=\int_{0}^{1}A_{\tau,\si}\hat{A}_{\si, \rho }d\si$ and here by
hypothesis $C_\tau=\int_{0}^{1} A_{\tau,\si} d\si$. Similarly, by
inserting \eqref{eq:cs2} into \eqref{eq:cs3}, we have
\begin{alignat}{2}
q_{n+1}& = q_n +h\int_{0}^{1} B_\tau \big( p_n +h\int_{0}^{1}
\hat{A}_{\tau, \si} f(t_n+C_\si h, Q_\si) d \si\big) d\tau \\
&=q_n+ h p_n+h^2 \int_{0}^{1} \bar{B}_\si  f(t_n+C_\si h, Q_\si)
d\si
\end{alignat}
where we denote $ \bar{B}_\si=\int_{0}^{1}B_\tau \hat{A}_{\tau,
\si}d\tau$, and note that $\int_{0}^{1}B_\tau d\tau=1$.

In summary, by using a csPRK method to \eqref{eq:first} and
eliminating the internal stage variable $P_\tau$, we can obtain the
method which is referred to as a continuous-stage
Runge-Kutta-Nystr\"{o}m method in this paper.

\begin{defn}[Continuous-stage Runge-Kutta-Nystr\"{o}m method]
Let $\bar{A}_{\tau, \si}$ be a function of variables $\tau,
\si\in[0,1]$ and $\bar{B}_\tau,\;\hat{B}_\tau,\;C_\tau$ be functions
of $\tau\in[0,1]$.  A continuous-stage Runge-Kutta-Nystr\"{o}m
(csRKN) method  for the solution of \eqref{eq:second} is given by
\begin{subequations}
    \begin{alignat}{2}
    \label{eq:csrkn1}
&Q_\tau=q_n +hC_\tau p_n +h^2\int_{0}^{1} \bar{A}_{\tau, \si}
f(t_n+C_\si h, Q_\si) d \si, \;\;\tau \in[0, 1], \\
    \label{eq:csrkn2}
&q_{n+1}=q_n+ h p_n+h^2 \int_{0}^{1} \bar{B}_\tau f(t_n+C_\tau h, Q_\tau) d\tau, \quad n\in \mathbb{N},\\
    \label{eq:csrkn3}
&p_{n+1} = p_n +h\int_{0}^{1} \hat{B}_\tau f(t_n+C_\tau h, Q_\tau)
d\tau,\quad n\in \mathbb{N},
    \end{alignat}
\end{subequations}
which can be characterized by the following Butcher tableau\\
\centerline{
    \begin{tabular}{c|c}\\[-4pt]
        $C_\tau$ & $\bar{A}_{\tau,\si}$ \\[2pt]
        \hline\\[-10pt]
        $ $ &$\bar{B}_\tau$ \\
        \hline\\[-10pt]
            $ $ &$\hat{B}_\tau$ \\
    \end{tabular}
}
\end{defn}

\section{Symplectic conditions for csRKN method}

When the system \eqref{eq:second} is autonomous (i.e.,
time-independent for the right-hand-side vector field) and $f$ is
the gradient of a scalar function, e.g., $$f(q)=-\nabla_qV(q),$$
then it becomes a separable Hamiltonian system in the form
$$\dot{z}=J^{-1}\nabla_{z}\mathbf{H}(z),\;z=(p, q)\in \mathbb{R}^{2d},$$
with the Hamiltonian $\mathbf{H}(z)=\frac{1}{2}p^Tp+V(q)$ and
$J=\left(\begin{array}{cccc}
                  0 & I_{d\times d}\\
                  -I_{d\times d} & 0
          \end{array}\right)$, and such system possesses an intrinsic
geometric structure called symplecticity. This states that the flow
$\varphi_t$ of the system is a symplectic transformation
\cite{hairerlw06gni}, i.e., $(\varphi'_t)^TJ\varphi'_t=J,$ where
$\varphi'_t$ denotes the derivative of $\varphi_t$ with respect to
the initial values. For Hamiltonian system, symplectic numerical
method is of considerable interest \cite{Feng84ods,hairerlw06gni},
as it always exhibits bounded small energy errors for the
exponentially long time, and can correctly mimic the qualitative
behavior of the original system (e.g., preserving the quasi-periodic
orbits (namely KAM tori) and chaotic regions of phase space
\cite{shang99kam}). A one-step method $\Phi_h:
(p_n,\,q_n)\mapsto(p_{n+1},\,q_{n+1})$, when applied to a
Hamiltonian system, is called symplectic if and only if
$(\Phi'_h)^TJ\Phi'_h=J$, or equivalently $dp_{n+1}\wedge
dq_{n+1}=dp_n \wedge dq_n$ (Here $\wedge$ denotes the standard wedge
product in differential geometry).

\subsection{The sufficient condition for symplecticity}

\begin{thm}\label{conditions: sym}
If a csRKN method denoted by
$(\bar{A}_{\tau,\si},\bar{B}_\tau,\hat{B}_\tau,C_\tau)$ satisfies
\begin{subequations}
\begin{alignat}{2}
\label{sym_cond_orig01}
&\hat{B}_\tau(1-C_\tau)=\bar{B}_\tau,\;\tau\in[0,1],\\
\label{sym_cond_orig02}
&\hat{B}_\tau(\bar{B}_\si-\bar{A}_{\tau,\si})=\hat{B}_\si(\bar{B}_\tau
-\bar{A}_{\si,\tau}),\;\tau,\si\in[0,1],
\end{alignat}
\end{subequations}
then the method is symplectic for solving the autonomous second
order differential equations \eqref{eq:second} with
$f(q)=-\nabla_qV(q)$ (which can be rewritten as a Hamiltonian
system).
\end{thm}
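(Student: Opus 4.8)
The plan is to verify the symplecticity identity $dp_{n+1}\wedge dq_{n+1}=dp_n\wedge dq_n$ directly, by taking the exterior derivative of the three defining relations \eqref{eq:csrkn1}--\eqref{eq:csrkn3} and exploiting the gradient structure of $f$. Writing $f_\tau=f(Q_\tau)$ and $df_\tau=f'(Q_\tau)\,dQ_\tau$, the crucial observation is that $f=-\nabla_q V$ forces the Jacobian $f'(Q_\tau)=-\nabla_q^2 V(Q_\tau)$ to be symmetric; consequently the one-index wedge $dQ_\tau\wedge df_\tau=dQ_\tau\wedge f'(Q_\tau)\,dQ_\tau$ vanishes, because contracting a symmetric matrix against the antisymmetric $dQ_\tau\wedge dQ_\tau$ gives zero. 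This single fact is what links the abstract coefficient conditions to the Hamiltonian (gradient) hypothesis.

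First I would differentiate \eqref{eq:csrkn2} and \eqref{eq:csrkn3} to obtain
\[
 dq_{n+1}=dq_n+h\,dp_n+h^2\!\int_0^1\!\bar B_\si\,df_\si\,d\si,\qquad
 dp_{n+1}=dp_n+h\!\int_0^1\!\hat B_\tau\,df_\tau\,d\tau,
\]
and then expand $dp_{n+1}\wedge dq_{n+1}$. The term $h\,dp_n\wedge dp_n$ drops out, leaving $dp_n\wedge dq_n$ together with four ``error'' terms: one pairing $dp_n$ with $\int\bar B_\si\,df_\si\,d\si$, one pairing $\int\hat B_\tau\,df_\tau\,d\tau$ with $dq_n$, one pairing $\int\hat B_\tau\,df_\tau\,d\tau$ with $dp_n$, and a double integral $\int\!\int\hat B_\tau\bar B_\si\,df_\tau\wedge df_\si\,d\tau\,d\si$. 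The objective is to show that the sum of these four terms vanishes under \eqref{sym_cond_orig01}--\eqref{sym_cond_orig02}.

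The key step is to eliminate $dq_n$ using the stage equation \eqref{eq:csrkn1}: differentiating it gives $dq_n=dQ_\tau-hC_\tau\,dp_n-h^2\!\int_0^1\bar A_{\tau,\si}\,df_\si\,d\si$. Substituting this into the $\int\hat B_\tau\,df_\tau\wedge dq_n$ term isolates a piece $h\int_0^1\hat B_\tau\,df_\tau\wedge dQ_\tau\,d\tau$, which is exactly where the symmetry of $f'$ applies and which is therefore identically zero. What remains, after this substitution, are a fresh $dp_n$-contribution and a fresh double-integral contribution. Collecting everything proportional to $df_\tau\wedge dp_n$ produces the factor $\hat B_\tau(1-C_\tau)-\bar B_\tau$, which vanishes by \eqref{sym_cond_orig01}; collecting the double-integral part produces the kernel $\hat B_\tau(\bar B_\si-\bar A_{\tau,\si})$, and since $df_\tau\wedge df_\si$ is antisymmetric in $(\tau,\si)$, the double integral equals one half the integral of the antisymmetrized kernel, which vanishes precisely when $\hat B_\tau(\bar B_\si-\bar A_{\tau,\si})=\hat B_\si(\bar B_\tau-\bar A_{\si,\tau})$, that is, by \eqref{sym_cond_orig02}.

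I expect the main obstacle to be organizational rather than conceptual: keeping the bookkeeping consistent while substituting the stage relation and then correctly grouping the single-integral ($df\wedge dp_n$) terms against the double-integral ($df\wedge df$) terms, with careful relabeling of the dummy variables $\tau,\si,\rho$. The one genuinely delicate point, the reduction of the double integral to its antisymmetric part via the antisymmetry of the wedge, must be stated with care, since it is exactly this that matches condition \eqref{sym_cond_orig02}; and the vanishing of $dQ_\tau\wedge df_\tau$, though a one-line consequence of $f$ being a gradient, is indispensable and should be flagged explicitly as the place where the Hamiltonian hypothesis enters.
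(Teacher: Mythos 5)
Your proposal is correct and follows essentially the same route as the paper's own proof: expand $dp_{n+1}\wedge dq_{n+1}$, eliminate $dq_n$ via the stage equation so that the symmetry of the Jacobian of $f=-\nabla_q V$ kills the $df_\tau\wedge dQ_\tau$ term, collect the $df_\tau\wedge dp_n$ terms to invoke \eqref{sym_cond_orig01}, and antisymmetrize the double-integral kernel to invoke \eqref{sym_cond_orig02}. No substantive differences from the paper's argument.
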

\begin{proof} By the csRKN method (\ref{eq:csrkn1}-\ref{eq:csrkn3}), we have
    \begin{equation}\label{eq:proofsym1}
        \begin{split}
            &\; dp_{n+1} \wedge dq_{n+1}\\
            &=d( p_n +h\int_{0}^{1} \hat{B}_\tau f(Q_\tau) d\tau) \wedge
            d(q_n+ h p_n+h^2 \int_{0}^{1} \bar{B}_\tau  f(Q_\tau) d\tau) \\
            &=dp_n \wedge dq_n + h \int_{0}^{1} (\hat{B}_\tau d f(Q_\tau)\wedge dq_n )d\tau +h dp_n \wedge dp_n \\
            &\quad+h^2 \int_{0}^{1} (\hat{B}_\tau d f(Q_\tau)\wedge dp_n) d\tau
            +h^2 \int_{0}^{1} (\bar{B}_\tau dp_n \wedge d f(Q_\tau)) d\tau \\ &
            \quad+h^3\int_{0}^{1}\int_{0}^{1} \hat{B}_\tau\bar{B}_\si  d f(Q_\tau) \wedge d f(Q_\si)d\si d \tau
        \end{split}
    \end{equation}
Because of the skew symmetry of wedge product, the third term
vanishes. By virtue of \eqref{eq:csrkn1}, the second term can be
recast as
\begin{equation}\label{eq:sovqn}
    \begin{split}
            &\;h \int_{0}^{1} (\hat{B}_\tau d f(Q_\tau) \wedge dq_n )d\tau\\
            &=h \int_{0}^{1} \big( \hat{B}_\tau d f(Q_\tau) \wedge
            d(Q_\tau-hC_\tau p_n -h^2\int_{0}^{1} \bar{A}_{\tau, \si} f(Q_\si) d \si)\big)d\tau \\
            &=h \int_{0}^{1}(\hat{B}_\tau d f(Q_\tau) \wedge dQ_\tau)
            d\tau- h^2 \int_{0}^{1} (\hat{B}_\tau C_\tau d f(Q_\tau) \wedge d p_n )d\tau   \\
            &\quad-h^3 \int_{0}^{1} (\int_{0}^{1} \hat{B}_\tau
            \bar{A}_{\tau, \si} d f(Q_\tau) \wedge d f(Q_\si) d \si ) d\tau   \\
    \end{split}
\end{equation}
Note that  $f(q)=-\nabla_q V(q)$ and its Jacobian matrix is
symmetric, the first term in the above equality vanishes. Then by
substituting \eqref{eq:sovqn} into \eqref{eq:proofsym1}, it yields
    \begin{equation}\label{eq:proofsym2}
    \begin{split}
    &\; dp_{n+1} \wedge dq_{n+1}\\
    &=dp_n \wedge dq_n- h^2 \int_{0}^{1}(\hat{B}_\tau C_\tau d f(Q_\tau) \wedge d p_n)d\tau\\
    &\quad-h^3 \int_{0}^{1} \int_{0}^{1}(\hat{B}_\tau \bar{A}_{\tau, \si} d f(Q_\tau)
    \wedge d f(Q_\si)) d \si d\tau +h^2 \int_{0}^{1} (\hat{B}_\tau d f(Q_\tau)\wedge dp_n) d\tau\\
    &\quad-h^2 \int_{0}^{1} (\bar{B}_\tau d f(Q_\tau) \wedge dp_n ) d\tau
    \;+h^3\int_{0}^{1}\int_{0}^{1} \hat{B}_\tau\bar{B}_\si  d f(Q_\tau) \wedge d f(Q_\si)d\si d \tau \\
    &=dp_n \wedge dq_n+h^2 \int_{0}^{1}(-\hat{B}_\tau C_\tau+\hat{B}_\tau-\bar{B}_\tau )d f(Q_\tau)\wedge dp_n d\tau  \\
    &\quad+h^3\int_{0}^{1} \int_{0}^{1} (\hat{B}_\tau\bar{B}_\si-\hat{B}_\tau \bar{A}_{\tau, \si} )d f(Q_\tau) \wedge d f(Q_\si) d \si d\tau
    \end{split}
    \end{equation}
The last term of the formula above can be reshaped as follows
\begin{equation}
\begin{split}
&\; h^3\int_{0}^{1} \int_{0}^{1} (\hat{B}_\tau\bar{B}_\si-\hat{B}_\tau \bar{A}_{\tau, \si} )d f(Q_\tau) \wedge d f(Q_\si) d \si d\tau\\
&=-\frac{h^3}{2}\int_{0}^{1} \int_{0}^{1} \big(\hat{B}_\tau
\bar{A}_{\tau, \si} d f(Q_\tau) \wedge d f(Q_\si)+\hat{B}_\si
\bar{A}_{\si, \tau} d f(Q_\si) \wedge d f(Q_\tau)\big) d \si d\tau\\
&\quad+\frac{h^3}{2}\int_{0}^{1}\int_{0}^{1}
\big(\hat{B}_\tau\bar{B}_\si d f(Q_\tau) \wedge d
f(Q_\si)+\hat{B}_\si\bar{B}_\tau df(Q_\si)
\wedge d f(Q_\tau)\big)d\si d \tau \\
&=-\frac{h^3}{2}\int_{0}^{1} \int_{0}^{1} \big(\hat{B}_\tau
\bar{A}_{\tau, \si} d f(Q_\tau) \wedge d f(Q_\si)-\hat{B}_\si
\bar{A}_{\si, \tau} d f(Q_\tau) \wedge d f(Q_\si)\big) d \si d\tau\\
&\quad+\frac{h^3}{2}\int_{0}^{1}\int_{0}^{1}
\big(\hat{B}_\tau\bar{B}_\si d f(Q_\tau) \wedge d
f(Q_\si)-\hat{B}_\si\bar{B}_\tau d f(Q_\tau)
\wedge d f(Q_\si)\big)d\si d \tau \\
&=\frac{h^3}{2} \int_{0}^{1}\int_{0}^{1}(\hat{B}_\tau\bar{B}_\si
-\hat{B}_\si\bar{B}_\tau -\hat{B}_\tau \bar{A}_{\tau,\si}+
\hat{B}_\si \bar{A}_{\si,\tau})df(Q_\tau) \wedge df(Q_\si) d \si
d\tau
\end{split}
\end{equation}
Therefore, if we require the condition given by
(\ref{sym_cond_orig01}-\ref{sym_cond_orig02}), then the last two
terms in \eqref{eq:proofsym2} vanish, and it yields $$dp_{n+1}\wedge
dq_{n+1}=dp_n \wedge dq_n,$$ which implies the symplecticity.
\end{proof}

A very special class of separable Hamiltonian systems commonly
considered in practice is the system with Hamiltonian
$$\mathbf{H}(p,q)=\frac{1}{2}p^TM^{-1}p+V(q),$$ where $M$ (mass matrix) is a
constant, symmetric and invertible matrix, and the corresponding
Hamiltonian system becomes
\begin{equation*}
\begin{cases}
\dot{q}=M^{-1}p,\\[2pt]
\dot{p}=-\nabla_q V(q).
\end{cases}
\end{equation*}
If we let $\widetilde{p}=M^{-1}p$, then we get
\begin{equation*}
\begin{cases}
\dot{q}=\widetilde{p},\\[2pt]
\dot{\widetilde{p}}=-M^{-1}\nabla_q V(q),
\end{cases}
\end{equation*}
which is in the form \eqref{eq:first}. By eliminating
$\widetilde{p}$, it reads
\begin{equation}\label{eq:Hs}
\ddot{q}=-M^{-1}\nabla_q V(q).
\end{equation}
For such a second order system, the csRKN method is
\begin{equation}\label{scheme1}
\begin{split}
&Q_\tau=q_n +hC_\tau \widetilde{p}_n +h^2\int_{0}^{1} \bar{A}_{\tau, \si} f(Q_\si) d \si, \;\;\tau \in[0, 1], \\
&q_{1}=q_n+ h \widetilde{p}_n+h^2 \int_{0}^{1} \bar{B}_\tau  f(Q_\tau) d\tau, \\
&\widetilde{p}_{n+1}= \widetilde{p}_n +h\int_{0}^{1} \hat{B}_\tau
f(Q_\tau) d\tau,
\end{split}
\end{equation}
where $f(q)=-M^{-1}\nabla_q V(q)$ and $\widetilde{p}_n=M^{-1}p_n$.

By Theorem \ref{conditions: sym}, if we require
(\ref{sym_cond_orig01}-\ref{sym_cond_orig02}), then the one-step
method \eqref{scheme1} mapping $(\widetilde{p}_n, q_n)$ to
$(\widetilde{p}_{n+1}, q_{n+1})$ is symplectic, i.e.,
$$d\widetilde{p}_{n+1}\wedge dq_{n+1}=d\widetilde{p}_n \wedge
dq_n.$$ However, we are interested in the method in terms of the
variables $p$ and $q$, rather than in terms of
$\widetilde{p}=\dot{q}$ and $q$. To address this issue, we observe
that \eqref{scheme1} can be recast as
\begin{equation}\label{scheme2}
\begin{split}
&Q_\tau=q_n +hC_\tau M^{-1}p_n +h^2\int_{0}^{1} \bar{A}_{\tau, \si} f(Q_\si) d \si, \;\;\tau \in[0, 1], \\
&q_{1}=q_n+ h M^{-1}p_n+h^2 \int_{0}^{1} \bar{B}_\tau  f(Q_\tau) d\tau, \\
&p_{n+1}=p_n +hM\int_{0}^{1} \hat{B}_\tau f(Q_\tau) d\tau,
\end{split}
\end{equation}
where the last formula is derived by multiplying $M$ from the
left-hand side of \eqref{scheme1}. By the similar arguments as the
proof of Theorem \ref{conditions: sym}, we can prove that it still
yields\footnote{A detailed proof will be found in our another coming
paper.}
$$dp_{n+1}\wedge dq_{n+1}=dp_n \wedge dq_n.$$
Therefore, the csRKN method \eqref{scheme2} remains symplectic under
the conditions (\ref{sym_cond_orig01}-\ref{sym_cond_orig02}).

\subsection{Further characterizations for symplecticity}

In what follows, we will show another useful result which shows the
characterizations for a csRKN method to be symplectic.

Now we introduce the $\iota$-degree normalized shifted Legendre
polynomial $P_\iota(x)$ by using the Rodrigues formula
$$P_0(x)=1,\;P_\iota(x)=\frac{\sqrt{2\iota+1}}{\iota!}\frac{{\dif}^\iota}{\dif x^\iota}
\Big(x^\iota(x-1)^\iota\Big),\; \;\iota=1,2,3,\cdots.$$ A well-known
property of such Legendre polynomials is that they are orthogonal to
each other with respect to the $L^2$ inner product in $[0,\,1]$
$$\int_0^1 P_\iota(t) P_\kappa(t)\,\dif t= \delta_{\iota\kappa},\quad\iota,\,
\kappa=0,1,2,\cdots,$$ and they as well satisfy the following
integration formulae
\begin{equation}\label{property}
\begin{split} &\int_0^xP_0(t)\,\dif
t=\xi_1P_1(x)+\frac{1}{2}P_0(x), \\
&\int_0^xP_\iota(t)\,\dif
t=\xi_{\iota+1}P_{\iota+1}(x)-\xi_{\iota}P_{\iota-1}(x),\quad
\iota=1,2,3,\cdots, \\
&\int_x^1P_\iota(t)\,\dif
t=\delta_{\iota0}-\int_0^{x}P_\iota(t)\,\dif t,\quad
\iota=0,1,2,\cdots,
\end{split}
\end{equation}
where $\xi_\iota=\frac{1}{2\sqrt{4\iota^2-1}}$ and $\delta_{ij}$ is
the Kronecker symbol.

Similarly as the continuous-stage (P)RK methods discussed in
\cite{Tanglx15cos}, we will use the simplifying hypothesis
$\hat{B}_\tau=1,C_\tau=\tau$ throughout this paper. By exploiting
the orthogonal polynomial expansions we get the following result.

\begin{thm}\label{construct_scsRKN}
The csRKN method denoted by
$(\bar{A}_{\tau,\si},\bar{B}_\tau,\hat{B}_\tau,C_\tau)$ with
$\hat{B}_\tau=1, C_\tau=\tau$ is symplectic for solving system
\eqref{eq:second} (as a separable Hamiltonian system), if
$\bar{A}_{\tau,\si}$ and $\bar{B}_\tau$ take the following forms in
terms of Legendre polynomials
\begin{equation}\label{sym_cond}
\begin{split}
&\qquad\qquad\qquad\bar{B}_\tau=1-\tau=\frac{1}{2}P_0(\tau)-\xi_1P_1(\tau),\quad\tau\in[0,1],\\
&\bar{A}_{\tau,\si}=\alpha_{(0,0)}+\alpha_{(0,1)}P_1(\si)+
\alpha_{(1,0)}P_1(\tau)+\sum\limits_{i+j>1}\alpha_{(i,j)}
P_i(\tau)P_j(\sigma),\;\;\tau,\si\in[0,1],
\end{split}
\end{equation}
where the expansion coefficients $\alpha_{(i,j)}$ as real parameters
satisfy
$$\alpha_{(0,0)}\in\mathbb{R},\;\,\alpha_{(0,1)}-\alpha_{(1,0)}
=-\frac{\sqrt{3}}{6},\;\,\alpha_{(i,j)}=\alpha_{(j,i)},\,i+j>1.$$
\end{thm}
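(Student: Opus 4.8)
The plan is to reduce the statement to the sufficient conditions \eqref{sym_cond_orig01}--\eqref{sym_cond_orig02} of Theorem \ref{conditions: sym} and to verify that the proposed Legendre expansions satisfy them exactly. Since we impose $\hat{B}_\tau=1$ and $C_\tau=\tau$, the first condition \eqref{sym_cond_orig01} reduces to $\bar{B}_\tau=1-\tau$. To check that the given expansion of $\bar{B}_\tau$ matches this, I would compute the low-degree normalized shifted Legendre polynomials from the Rodrigues formula, namely $P_0(\tau)=1$ and $P_1(\tau)=\sqrt{3}(2\tau-1)$, and recall $\xi_1=\tfrac{1}{2\sqrt{3}}$; then $\tfrac{1}{2}P_0(\tau)-\xi_1P_1(\tau)=\tfrac{1}{2}-\tfrac{1}{2}(2\tau-1)=1-\tau$, so \eqref{sym_cond_orig01} holds automatically.

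The substantive part of the argument concerns the second condition \eqref{sym_cond_orig02}. With $\hat{B}_\tau=\hat{B}_\si=1$ it collapses to the antisymmetry relation
\[
\bar{A}_{\si,\tau}-\bar{A}_{\tau,\si}=\bar{B}_\tau-\bar{B}_\si=\si-\tau,
\]
using the form of $\bar{B}$ just established. The idea is now to expand both sides in the complete orthonormal tensor basis $\{P_i(\tau)P_j(\si)\}_{i,j\ge0}$ of $L^2([0,1]^2)$ and match coefficients. Writing the assumed expansion as the full double sum $\bar{A}_{\tau,\si}=\sum_{i,j}\alpha_{(i,j)}P_i(\tau)P_j(\si)$ and relabelling the summation indices in the swapped term, one obtains
\[
\bar{A}_{\si,\tau}-\bar{A}_{\tau,\si}=\sum_{i,j}\big(\alpha_{(j,i)}-\alpha_{(i,j)}\big)P_i(\tau)P_j(\si),
\]
so the left-hand side depends only on the antisymmetric part of the coefficient array.

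Next I would expand the right-hand side $\si-\tau$ in the same basis. Using $\tau=\tfrac{1}{2}P_0(\tau)+\tfrac{1}{2\sqrt{3}}P_1(\tau)$, the $P_0$-contributions cancel and one finds $\si-\tau=\tfrac{1}{2\sqrt{3}}\big(P_0(\tau)P_1(\si)-P_1(\tau)P_0(\si)\big)$. Equating coefficients by orthonormality then yields the full system: the $(0,0)$ coefficient gives $0=0$, leaving $\alpha_{(0,0)}$ free; the $(0,1)$ and $(1,0)$ coefficients both give $\alpha_{(1,0)}-\alpha_{(0,1)}=\tfrac{1}{2\sqrt{3}}=\tfrac{\sqrt{3}}{6}$, i.e. $\alpha_{(0,1)}-\alpha_{(1,0)}=-\tfrac{\sqrt{3}}{6}$; and every remaining coefficient with $i+j>1$ forces $\alpha_{(i,j)}=\alpha_{(j,i)}$. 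These are precisely the stated constraints, so the assumed forms guarantee \eqref{sym_cond_orig02}, and symplecticity follows from Theorem \ref{conditions: sym}.

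I expect the only real obstacle to be careful bookkeeping rather than conceptual difficulty: correctly isolating the antisymmetric part of the double-indexed coefficient array after the index swap, and expanding the elementary functions $\bar{B}_\tau$ and $\si-\tau$ in the \emph{normalized} (rather than classical) shifted Legendre basis with the correct factors $\sqrt{2\iota+1}$ and $\xi_\iota$. The completeness of the tensor Legendre basis in $L^2([0,1]^2)$ is what licenses the coefficient-matching step, and I would invoke it explicitly to conclude that matching all coefficients is both necessary and sufficient for the functional identity to hold.
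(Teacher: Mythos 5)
Your proposal is correct and follows essentially the same route as the paper: reduce to the sufficient conditions of Theorem \ref{conditions: sym}, note that $\hat{B}_\tau=1$, $C_\tau=\tau$ force $\bar{B}_\tau=1-\tau=\frac{1}{2}P_0(\tau)-\xi_1P_1(\tau)$, and then rewrite \eqref{sym_cond_orig02} as the antisymmetry relation $\bar{A}_{\tau,\si}-\bar{A}_{\si,\tau}=\tau-\si=\xi_1\big(P_1(\tau)-P_1(\si)\big)$, which upon expanding $\bar{A}$ in the tensor Legendre basis and matching coefficients yields exactly the stated constraints on the $\alpha_{(i,j)}$. The only cosmetic difference is that you argue in the verification direction (constraints $\Rightarrow$ identity) and explicitly invoke completeness of the basis, whereas the paper derives the constraints from the identity; the computation is the same.
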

\begin{proof}
By the assumption $\hat{B}_\tau=1, C_\tau=\tau$ and using
\eqref{sym_cond_orig01} we get
$$\bar{B}_\tau=1-\tau=\frac{1}{2}P_0(\tau)-\xi_1P_1(\tau),$$
inserting it into \eqref{sym_cond_orig02}, then it ends up with
\begin{equation}\label{eq:AB}
\bar{A}_{\tau,\,\sigma}-\bar{A}_{\sigma,\,\tau}=\tau-\sigma=
\xi_1(P_1(\tau)-P_1(\sigma))=\frac{\sqrt{3}}{6}(P_1(\tau)-P_1(\sigma)),
\end{equation}
in which we have used the known equality
$\tau=\frac{1}{2}P_0(\tau)+\xi_1P_1(\tau)$.

Next, assume $\bar{A}_{\tau,\,\sigma}$ can be expanded as a series
in terms of the orthogonal basis
$\left\{P_i(\tau)P_j(\sigma)\right\}_{i,j=0}^\infty$ in
$[0,1]\times[0,1]$, written in the form
\begin{equation*}
\bar{A}_{\tau,\,\sigma}=\sum\limits_{0\leq
i,j\in\mathbb{Z}}\alpha_{(i,j)}
P_i(\tau)P_j(\sigma),\quad\alpha_{(i,j)}\in \mathbb{R},
\end{equation*}
and then by exchanging $\tau\leftrightarrow\sigma$ we have
\begin{equation*}
\bar{A}_{\sigma,\,\tau}=\sum\limits_{0\leq
i,j\in\mathbb{Z}}\alpha_{(i,j)} P_i(\sigma)P_j(\tau)
=\sum\limits_{0\leq i,j\in\mathbb{Z}}\alpha_{(j,i)}
P_j(\sigma)P_i(\tau).
\end{equation*}
Substituting the above two expressions into \eqref{eq:AB} and
collecting the like basis, it gives
$$\alpha_{(0,0)}\in\mathbb{R},\;\,\alpha_{(0,1)}-\alpha_{(1,0)}
=-\frac{\sqrt{3}}{6},\;\,\alpha_{(i,j)}=\alpha_{(j,i)},\,i+j>1,$$
which completes the proof.
\end{proof}

\section{Symplectic RKN methods based on csRKN}

In this section, we discuss the construction of symplectic RKN
methods based on csRKN.

It is almost mandatory that the practical implementation of the
csRKN method \eqref{eq:csrkn1}-\eqref{eq:csrkn3} needs the use of
numerical quadrature formula. By applying the quadrature formula
$(b_i, c_i)_{i=1}^r$ of order $p$ to
\eqref{eq:csrkn1}-\eqref{eq:csrkn3}, with abuse of notations
$Q_i=Q_{c_i}$, we derive an $r$-stage classical RKN method
\begin{subequations}
    \begin{alignat}{2}
    \label{eq:rkn1}
    &Q_i=q_n +hC_i p_n +h^2\sum\limits_{j=1}^{r} b_j \bar{A}_{ij} f(Q_j),\quad i=1,\cdots, r, \\
    \label{eq:rkn2}
    &q_{n+1}=q_n+ h p_n+h^2\sum\limits_{i=1}^{r} b_i\bar{B}_i  f(Q_i),\quad n\in \mathbb{N},  \\
    \label{eq:rkn3}
    &p_{n+1}= p_n +h\sum\limits_{i=1}^{r} b_i\hat{B}_i f(Q_i),\quad n\in \mathbb{N},
    \end{alignat}
\end{subequations}
where $\bar{A}_{ij}=\bar{A}_{c_i,c_j}, \bar{B}_i=\bar{B}_{c_i},
\hat{B}_i=\hat{B}_{c_i}, C_i=C_{c_i}$, which can be formulated by
the following Butcher tableau
\begin{equation}\label{tab:rknbutcher}
\begin{tabular}{c|c c c }\\[-4pt]
$C_1$ & $b_1\bar{A}_{11}$ & $\cdots$ &$b_r\bar{A}_{1r}$\\
$\vdots$ & $\vdots$ & & $\vdots$ \\
$ C_r$ & $b_1\bar{A}_{r1}$ &$\cdots$ &$b_r\bar{A}_{rr}$ \\[2pt]
\hline\\[-8pt]
$ $ & $b_1\bar{B}_{1}$ & $\cdots$ &$b_r\bar{B}_{r}$  \\[2pt]
\hline\\[-8pt]
$ $ & $b_1\hat{B}_{1}$ &$\cdots$ & $b_r\hat{B}_{r}$
\end{tabular}
\end{equation}

In particular, if we use the hypothesis
$\bar{B}_\tau=\hat{B}_\tau(1-C_\tau),\,\hat{B}_\tau=1,\,
C_\tau=\tau$ for $\tau\in[0,1]$, we then get an $r$-stage RKN method
with tableau
\begin{equation}\label{RKN:qua}
\begin{tabular}{c|c c c }\\[-4pt]
$c_1$ & $b_1\bar{A}_{11}$ & $\cdots$ &$b_r\bar{A}_{1r}$\\
$\vdots$ & $\vdots$ & & $\vdots$ \\
$ c_r$ & $b_1\bar{A}_{r1}$ &$\cdots$ &$b_r\bar{A}_{rr}$\\
\hline\\[-8pt]
$ $ & $\bar{b}_1$ & $\cdots$ &$\bar{b}_r$  \\
\hline\\[-8pt]
$ $ & $b_1$ &$\cdots$ & $b_r$
\end{tabular}
\end{equation}
where $\bar{b}_i=b_i(1-c_i),\; i=1,\cdots,r$.

The following result implies that we can construct symplectic RKN
method via symplectic csRKN method with the help of a quadrature
formula.
\begin{thm}
If the csRKN method denoted by
$(\bar{A}_{\tau,\si},\bar{B}_\tau,\hat{B}_\tau,C_\tau)$ satisfies
the symplectic condition
(\ref{sym_cond_orig01}-\ref{sym_cond_orig02}), then the associated
RKN method \eqref{tab:rknbutcher} derived by using a quadrature
formula $(b_i, c_i)_{i=1}^r$ is still symplectic.
\end{thm}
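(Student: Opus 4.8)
The plan is to mimic the wedge-product computation in the proof of Theorem~\ref{conditions: sym}, but with the continuous integrals $\int_0^1(\cdots)\,\dif\tau$ replaced by the quadrature sums $\sum_{i=1}^r b_i(\cdots)$ and the double integrals by $\sum_{i,j} b_ib_j(\cdots)$. The decisive observation is that the symplectic conditions \eqref{sym_cond_orig01}--\eqref{sym_cond_orig02} hold \emph{pointwise} for all $\tau,\si\in[0,1]$, hence in particular at the quadrature abscissae $\tau=c_i$, $\si=c_j$; no exactness of the quadrature is needed, since every cancellation will take place termwise inside the sums.

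First I would write down the differential of the discrete scheme \eqref{eq:rkn1}--\eqref{eq:rkn3}, namely
\begin{equation*}
dQ_i=dq_n+hC_i\,dp_n+h^2\sum_{j=1}^r b_j\bar{A}_{ij}\,df(Q_j),
\end{equation*}
together with $dp_{n+1}=dp_n+h\sum_i b_i\hat{B}_i\,df(Q_i)$ and $dq_{n+1}=dq_n+h\,dp_n+h^2\sum_i b_i\bar{B}_i\,df(Q_i)$. Expanding $dp_{n+1}\wedge dq_{n+1}$ yields the discrete analogue of \eqref{eq:proofsym1}: the $h\,dp_n\wedge dp_n$ term drops by skew-symmetry, and after substituting $dq_n$ from the relation above into the mixed term $h\sum_i b_i\hat{B}_i\,df(Q_i)\wedge dq_n$ one produces the term $h\sum_i b_i\hat{B}_i\,df(Q_i)\wedge dQ_i$, which vanishes because $f=-\nabla_q V$ has symmetric Jacobian $f'(Q_i)$, so that $df(Q_i)\wedge dQ_i=(f'(Q_i)\,dQ_i)\wedge dQ_i=0$, exactly as in the continuous proof.

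What remains, paralleling \eqref{eq:proofsym2}, are two families of terms. The terms proportional to $dp_n$ collect into
\begin{equation*}
h^2\sum_{i=1}^r b_i\bigl(\hat{B}_i(1-C_i)-\bar{B}_i\bigr)\,df(Q_i)\wedge dp_n,
\end{equation*}
whose coefficient vanishes by \eqref{sym_cond_orig01} evaluated at $\tau=c_i$. The terms proportional to $df(Q_i)\wedge df(Q_j)$ are $h^3\sum_{i,j} b_ib_j(\hat{B}_i\bar{B}_j-\hat{B}_i\bar{A}_{ij})\,df(Q_i)\wedge df(Q_j)$, which I would symmetrize in $i\leftrightarrow j$ (using skew-symmetry of the wedge) to rewrite as
\begin{equation*}
\frac{h^3}{2}\sum_{i,j=1}^r b_ib_j\bigl(\hat{B}_i(\bar{B}_j-\bar{A}_{ij})-\hat{B}_j(\bar{B}_i-\bar{A}_{ji})\bigr)\,df(Q_i)\wedge df(Q_j);
\end{equation*}
the bracket is precisely \eqref{sym_cond_orig02} evaluated at $\tau=c_i$, $\si=c_j$, hence zero. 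Therefore $dp_{n+1}\wedge dq_{n+1}=dp_n\wedge dq_n$.

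The argument is genuinely routine once the key point is isolated: because \eqref{sym_cond_orig01}--\eqref{sym_cond_orig02} are identities in $\tau,\si$, the coefficient combinations $\hat{B}_\tau(1-C_\tau)-\bar{B}_\tau$ and $\hat{B}_\tau(\bar{B}_\si-\bar{A}_{\tau,\si})-\hat{B}_\si(\bar{B}_\tau-\bar{A}_{\si,\tau})$ that vanished pointwise in the continuous proof are already zero at every pair of nodes $(c_i,c_j)$, so the weights $b_i,b_j$ merely multiply a vanishing bracket and the sums inherit the cancellations with no error term. The only mild obstacle is bookkeeping: one must keep each weight attached to the correct coefficient ($\bar{A}_{ij}$ carries $b_j$, while $\bar{B}_i$ and $\hat{B}_i$ carry $b_i$) when matching the discrete tableau \eqref{tab:rknbutcher} against the pointwise conditions. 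Alternatively, one could invoke the classical symplecticity conditions for RKN methods and verify that the tableau \eqref{tab:rknbutcher} satisfies them, which again reduces to evaluating \eqref{sym_cond_orig01}--\eqref{sym_cond_orig02} at the nodes after dividing out the (assumed nonzero) weights.
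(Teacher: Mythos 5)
Your argument is correct, but it takes a different (more self-contained) route than the paper. The paper's proof is a two-line reduction: it quotes the classical sufficient condition of Sanz-Serna and Calvo for an RKN tableau $(\bar{a}_{ij},\bar{b}_i,b_i,c_i)$ to be symplectic, namely $\bar{b}_i=b_i(1-c_i)$ and $b_i(\bar{b}_j-\bar{a}_{ij})=b_j(\bar{b}_i-\bar{a}_{ji})$, and then simply checks that the coefficients $(b_j\bar{A}_{ij},\,b_i\bar{B}_i,\,b_i\hat{B}_i,\,C_i)$ of the tableau \eqref{tab:rknbutcher} satisfy these identities once \eqref{sym_cond_orig01}--\eqref{sym_cond_orig02} are evaluated at the nodes $\tau=c_i$, $\si=c_j$ and multiplied through by the weights --- exactly the ``alternative'' you mention in your last sentence, except that the paper does not need to divide by the weights or assume them nonzero, since it verifies the weighted identities directly. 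Your main argument instead reruns the wedge-product computation of Theorem~\ref{conditions: sym} for the discrete scheme \eqref{eq:rkn1}--\eqref{eq:rkn3}, with integrals replaced by quadrature sums; the key observation that the symplectic conditions hold pointwise at the abscissae, so no quadrature exactness is needed and every cancellation happens termwise, is sound, and your bookkeeping of which coefficient carries which weight is consistent with \eqref{tab:rknbutcher}. What your route buys is independence from the cited classical result (the theorem becomes self-contained); what the paper's route buys is brevity and the reassurance that the discrete tableau literally satisfies the standard RKN symplecticity criterion. Both are complete proofs.
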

\begin{proof}
Recall that the sufficient condition for a classical RKN method
denoted by $(\bar{a}_{ij},\,\bar{b}_i,\,b_i,\,c_i)$ to be symplectic
is \cite{sanzc94nhp}
\begin{equation*}
\begin{split}
&\bar{b}_i=b_i(1-c_i),\quad i=1,\cdots,r,\\
&b_i(\bar{b}_j-\bar{a}_{ij})=b_j(\bar{b}_i-\bar{a}_{ji}),\quad i,
j=1,\cdots,r.
\end{split}
\end{equation*}
By (\ref{sym_cond_orig01}-\ref{sym_cond_orig02}), we have the
following equalities
\begin{equation*}
\begin{split}
&\bar{B}_i=\hat{B}_i(1-C_i),\quad i=1,\cdots,r,\\
&\hat{B}_i(\bar{B}_j-\bar{A}_{ij})=\hat{B}_j(\bar{B}_i-\bar{A}_{ji}),\quad
i, j=1,\cdots,r.
\end{split}
\end{equation*}
Therefore, the coefficients
$(b_j\bar{A}_{i,j},\,b_i\bar{B}_i,\,b_i\hat{B}_i,\,C_i)$ of the
associated RKN method satisfy
\begin{equation*}
\begin{split}
&b_i\bar{B}_i=b_i\hat{B}_i(1-C_i),\quad i=1,\cdots,r,\\
&b_i\hat{B}_i(b_j\bar{B}_j-b_j\bar{A}_{ij})=b_j\hat{B}_j(b_i\bar{B}_i-b_i\bar{A}_{ji}),\quad
i, j=1,\cdots,r,
\end{split}
\end{equation*}
which completes the proof by using the classical result.
\end{proof}

\subsection{Order conditions for RKN type methods}

To construct symplectic RKN method with a preassigned order, let us
introduce the order conditions for RKN type methods.

It is known that the classical RKN method for solving
\eqref{eq:second} can be formulated as
\begin{subequations}
    \begin{alignat}{4}
    \label{eq:grkn1}
    &Q_i=q_n +hc_i p_n +h^2\sum\limits_{j=1}^r \bar{a}_{i j} f(t_n+c_j h, Q_j), \quad i=1,\cdots, r, \\
    \label{eq:grkn2}
    &q_{n+1}=q_n+ h p_n+h^2\sum\limits_{i=1}^{r}\bar{b}_if(t_n+c_i h, Q_i), \quad n\in \mathbb{N},\\
    \label{eq:grkn3}
    &p_{n+1}= p_n +h\sum\limits_{i=1}^{r} b_i f(t_n+c_i h, Q_i),\quad n\in \mathbb{N},
    \end{alignat}
\end{subequations}
and as shown in \cite{hairernw93sod}, under the assumption
\begin{equation}
    \bar{b}_i=b_i(1-c_i),\quad i=1,\ldots, r,
\end{equation}
the number of order conditions are drastically reduced and there are
rather fewer order conditions need to be further considered, as
listed below \cite{hairernw93sod}
\begin{equation*}
\begin{array}{|l|l|}
1)  \sum\limits_{i=1}^{r} b_i=1; & 2) \sum\limits_{i=1}^{r} b_ic_i=\frac{1}{2}; \\
3) \sum\limits_{i=1}^{r} b_ic_i^2=\frac{1}{3}; & 4)
\sum\limits_{i=1}^{r}\sum\limits_{j=1}^{r} b_i
\bar{a}_{ij}=\frac{1}{6}; \\
5) \sum\limits_{i=1}^{r} b_ic_i^3=\frac{1}{4}; & 6)
\sum\limits_{i=1}^{r}\sum\limits_{j=1}^{r} b_ic_i
\bar{a}_{ij}=\frac{1}{8}; \\
7) \sum\limits_{i=1}^{r}\sum\limits_{j=1}^{r} b_i
\bar{a}_{ij}c_j=\frac{1}{24}; & 8) \sum\limits_{i=1}^{r}
b_ic_i^4=\frac{1}{5}; \\
9) \sum\limits_{i=1}^{r}\sum\limits_{j=1}^{r} b_i
c_i^2\bar{a}_{ij}=\frac{1}{10}; & 10)
\sum\limits_{i=1}^{r}\sum\limits_{j=1}^{r}\sum\limits_{k=1}^{r}
b_i \bar{a}_{ij}\bar{a}_{ik}=\frac{1}{20}; \\
11) \sum\limits_{i=1}^{r}\sum\limits_{j=1}^{r} b_i
c_i\bar{a}_{ij}c_j=\frac{1}{30};  & 12)
\sum\limits_{i=1}^{r}\sum\limits_{j=1}^{r} b_i
\bar{a}_{ij}c_j^2=\frac{1}{60}; \\
13) \sum\limits_{i=1}^{r}\sum\limits_{j=1}^{r}\sum\limits_{k=1}^{r}
b_i \bar{a}_{ij}\bar{a}_{jk}=\frac{1}{120};  & 14)  \cdots
\end{array}
\end{equation*}

If the condition 1) holds, then the RKN method is of order 1; if
conditions 1)-2) hold, then the RKN method is of order 2; if
conditions 1)-4) hold, then the RKN method is of order 3; if
conditions 1)-7) hold, then the RKN method is of order 4; if
conditions 1)-13) hold, then the RKN method is of order 5.

Similarly as the classical case, under the assumption
$\bar{B}_\tau=\hat{B}_\tau(1-C_\tau)$, we have the following order
conditions for csRKN method
\begin{equation*}
\begin{array}{|l|l|}
1)  \int_0^1 \hat{B}_\tau  d\tau=1; & 2) \int_0^1 \hat{B}_\tau
C_\tau  d\tau=\frac{1}{2}; \\
3) \int_0^1 \hat{B}_\tau C_\tau^2 d\tau=\frac{1}{3}; & 4)
\int_0^1\int_0^1
\hat{B}_\tau\bar{A}_{\tau,\si}d\si d\tau=\frac{1}{6}; \\
5) \int_0^1 \hat{B}_\tau C_\tau^3 d\tau=\frac{1}{4}; & 6)
\int_0^1\int_0^1 \hat{B}_\tau C_\tau \bar{A}_{\tau,\si}d\si
d\tau=\frac{1}{8}; \\
7) \int_0^1\int_0^1 \hat{B}_\tau \bar{A}_{\tau,\si}C_\si d\si
d\tau=\frac{1}{24}; & 8) \int_0^1
\hat{B}_\tau  C_\tau^4 d\tau=\frac{1}{5}; \\
9) \int_0^1\int_0^1 \hat{B}_\tau C_\tau^2 \bar{A}_{\tau,\si}d\si
d\tau=\frac{1}{10}; & 10) \int_0^1\int_0^1\int_0^1
\hat{B}_\tau\bar{A}_{\tau,\si}
\bar{A}_{\tau,\rho}d\rho d\si d\tau=\frac{1}{20};  \\
11) \int_0^1\int_0^1 \hat{B}_\tau C_\tau \bar{A}_{\tau,\si}C_\si
d\si d\tau=\frac{1}{30};  & 12) \int_0^1\int_0^1 \hat{B}_\tau
\bar{A}_{\tau,\si}C_\si^2 d\si
d\tau=\frac{1}{60}; \\
13) \int_0^1\int_0^1\int_0^1
\hat{B}_\tau\bar{A}_{\tau,\si}\bar{A}_{\si,\rho}d\rho d\si
d\tau=\frac{1}{120};  & 14)  \cdots
\end{array}
\end{equation*}

Note that $\bar{B}_\tau=\hat{B}_\tau(1-C_\tau)$ is naturally
satisfied by the hypothesis $\hat{B}_\tau=1,\,C_\tau=\tau$.
Moreover, if the condition 1) holds, then the csRKN method is of
order 1; if conditions 1)-2) hold, then the csRKN method is of order
2; if conditions 1)-4) hold, then the csRKN method is of order 3; if
conditions 1)-7) hold, then the csRKN method is of order 4; if
conditions 1)-13) hold, then the csRKN method is of order 5.

It is obvious that the conditions 1)-2) always hold, so the csRKN
methods presented in this paper are of order 2 at least. Moreover,
it is found that conditions 2), 3), 5), 8) are also naturally
satisfied, so other conditions will be further investigated when we
try to construct higher order csRKN methods.

\subsection{Construction of symplectic RKN methods}

Though it is possible to construct RKN type methods with arbitrarily
high order for some special purposes, here we will restrict
ourselves on the construction of symplectic integrators. By using
expansion of Legendre orthogonal polynomials, we have the following
identities
\begin{equation}
\begin{split}
&1=P_0(\tau),\\
&\tau=\frac{1}{2}P_0(\tau)+\frac{\sqrt{3}}{6}P_1(\tau),\\
&\tau^2=\frac{1}{3}P_0(\tau)+\frac{\sqrt{3}}{6}P_1(\tau)+\frac{\sqrt{5}}{30}P_2(\tau),\\
&\tau^3=\frac{1}{4}P_0(\tau)+\frac{3\sqrt{3}}{20}P_1(\tau)+
\frac{\sqrt{5}}{20}P_2(\tau)+\frac{\sqrt{7}}{140}P_3(\tau),\\
&\cdots
\end{split}
\end{equation}
which turn out to be very helpful for the investigation of the order
conditions. For convenience, here we provide the former several
Legendre polynomials
\begin{equation}
\begin{split}
&P_0(\tau)=1,\\
&P_1(\tau)=\sqrt{3}(2\tau-1),\\
&P_2(\tau)=\sqrt{5}(6\tau^2-6\tau+1),\\
&P_3(\tau)=\sqrt{7}(20\tau^3-30\tau^2+12\tau-1),\\
&\cdots
\end{split}
\end{equation}

In what follows, we present the construction of symplectic
integrators up to order 5 and some examples will be given.

\subsubsection{$2$-order symplectic integrators}

Although the csRKN methods presented in this paper are always of
order 2 at least, but for a symplectic csRKN method, the coefficient
$\bar{A}_{\tau,\si}$ should be designed by Theorem
\ref{construct_scsRKN}, an example is given in what follows.
\begin{table}
\[\ba{c|c} \frac{1}{2} & \alpha\\[2pt]
\hline & \frac{1}{2}\\[2pt]
\hline & 1\ea\qquad \ba{c|cc} 0 &
\frac{1}{4}\alpha-\frac{\sqrt{3}}{2}\beta+\frac{1}{8}&
\frac{3}{4}\alpha-\frac{\sqrt{3}}{2}\beta-\frac{1}{8}\\[2pt]
\frac{2}{3} & \frac{1}{4}\alpha-\frac{\sqrt{3}}{6}\beta+\frac{1}{8}&
\frac{3}{4}\alpha+\frac{\sqrt{3}}{2}\beta-\frac{1}{8}\\[2pt]
\hline & \frac{1}{4}& \frac{1}{4}\\[2pt]
\hline & \frac{1}{4}& \frac{3}{4}
\ea\]\\
\[\ba{c|cc} \frac{1}{3} &
\frac{3}{4}\alpha-\frac{\sqrt{3}}{2}\beta+\frac{1}{8}&
\frac{1}{4}\alpha+\frac{\sqrt{3}}{6}\beta-\frac{1}{8}\\[2pt]
1 & \frac{3}{4}\alpha+\frac{\sqrt{3}}{2}\beta+\frac{1}{8}&
\frac{1}{4}\alpha+\frac{\sqrt{3}}{2}\beta-\frac{1}{8}\\[2pt]
\hline & \frac{1}{2}& 0\\[2pt]
\hline & \frac{3}{4}& \frac{1}{4} \ea\qquad \ba{c|cc} 0 &
\frac{1}{2}\alpha-\sqrt{3}\beta+\frac{1}{4}&
\frac{1}{2}\alpha-\frac{1}{4}\\[2pt]
1 & \frac{1}{2}\alpha+\frac{1}{4}&
\frac{1}{2}\alpha+\sqrt{3}\beta-\frac{1}{4}\\[2pt]
\hline & \frac{1}{2}& 0\\[2pt]
\hline & \frac{1}{2}& \frac{1}{2} \ea\] \caption{Symplectic RKN
methods of order 2 by using different quadrature formulae. Top Left:
by Gaussian quadrature, Top Right: by Radau-left quadrature, Bottom
Left: by Radau-right quadrature, Bottom Right: by Lobatto
quadrature.}\label{exa:SRKN2}
\end{table}
\begin{exa}
If we take the coefficients
$(\bar{A}_{\tau,\si},\,\bar{B}_\tau,\,\hat{B}_\tau,\,C_\tau)$ as
\begin{equation} \label{eq:2coeff}
\begin{split}
&\bar{A}_{\tau,\si}=\alpha+(\beta-\frac{\sqrt{3}}{6})
P_1(\si)+\beta P_1(\tau),\\
&\bar{B}_\tau=1-\tau,\;\hat{B}_\tau=1,\;C_\tau=\tau,
\end{split}
\end{equation}
where $\alpha$ and $\beta$ are two real parameters, then we get a
two-parameter family of $2$-order symplectic csRKN methods.

By using any numerical quadrature formula with order $p\geq2$ we can
get the classical symplectic RKN methods of order\footnote{This can
be easily checked by the classical order conditions that listed in
subsection 4.1.} $2$, e.g., Gaussian quadrature with $1$ node,
Radau-left or Radau-right quadrature with $2$ nodes, Lobatto
quadrature with $2$ nodes. The corresponding symplectic RKN methods
obtained by using different quadrature formulae are shown in Table
\ref{exa:SRKN2}.
\end{exa}

\subsubsection{$3$-order symplectic integrators}

By inserting \eqref{sym_cond} into the fourth order condition and
using the orthogonality of the Legendre polynomials, it gives
\begin{equation}
\int_0^1\int_0^1 \hat{B}_\tau\bar{A}_{\tau,\si}d\si
d\tau=\int_0^1(\alpha_{(0,0)}+\alpha_{(1,0)}P_1(\tau)
+\sum\limits_{i>1}\alpha_{(i,0)}P_i(\tau))d\tau=
\alpha_{(0,0)}=\frac{1}{6},
\end{equation}
therefore, if we require $\alpha_{(0,0)}=\frac{1}{6}$, then we can
get a class of $3$-order symplectic csRKN methods.
\begin{table}
\[\ba{c|cc} \frac{3-\sqrt{3}}{6}&
\frac{1+\sqrt{3}}{12}-\alpha+\frac{1}{2}\beta&
\frac{1-\sqrt{3}}{12}-\frac{1}{2}\beta\\[2pt]
\frac{3+\sqrt{3}}{6} & \frac{1+\sqrt{3}}{12}-\frac{1}{2}\beta&
\frac{1-\sqrt{3}}{12}+\alpha+\frac{1}{2}\beta\\[2pt]
\hline & \frac{1}{4}+\frac{\sqrt{3}}{12}& \frac{1}{4}-\frac{\sqrt{3}}{12}\\[2pt]
\hline & \frac{1}{2}& \frac{1}{2} \ea\qquad \ba{c|cc} 0 &
\frac{2-6\sqrt{3}\alpha+9\beta}{12}&
-\frac{\sqrt{3}}{2}\alpha-\frac{3}{4}\beta\\[2pt]
\frac{2}{3} & \frac{2-2\sqrt{3}\alpha-3\beta}{12} &
\frac{\sqrt{3}}{2}\alpha+\frac{1}{4}\beta\\[2pt]
\hline & \frac{1}{4}& \frac{1}{4}\\[2pt]
\hline & \frac{1}{4}& \frac{3}{4}
\ea\]\\
\[\ba{c|cc} \frac{1}{3} &
\frac{1-2\sqrt{3}\alpha+\beta}{4} &
\frac{-1+2\sqrt{3}\alpha-3\beta}{12}\\[2pt]
1 & \frac{1+2\sqrt{3}\alpha-3\beta}{4}&
\frac{-1+6\sqrt{3}\alpha+9\beta}{12}\\[2pt]
\hline & \frac{1}{2}& 0\\[2pt]
\hline & \frac{3}{4}& \frac{1}{4} \ea\qquad \ba{c|ccc} 0 &
\frac{6-18\sqrt{3}\alpha+27\beta}{54} &
\frac{1-6\sqrt{3}\alpha}{9} & -\frac{1}{18}-\frac{1}{2}\beta \\[2pt]
\frac{1}{2} & \frac{1}{9}-\frac{\sqrt{3}}{6}\alpha  &
\frac{1}{9}& -\frac{1}{18}+\frac{\sqrt{3}}{6}\alpha\\[2pt]
1 & \frac{1}{9}-\frac{1}{2}\beta& \frac{1+6\sqrt{3}\alpha}{9} &
\frac{-1+6\sqrt{3}\alpha+9\beta}{18}\\[2pt]
\hline & \frac{1}{6}& \frac{1}{3}& 0\\[2pt]
\hline & \frac{1}{6}& \frac{2}{3}& \frac{1}{6} \ea\]
\caption{Symplectic RKN methods of order 3 by using different
quadrature formulae. Top Left: by Gaussian quadrature, Top Right: by
Radau-left quadrature, Bottom Left: by Radau-right quadrature,
Bottom Right: by Lobatto quadrature.}\label{exa:SRKN3}
\end{table}
\begin{exa}
If we take the coefficients
$(\bar{A}_{\tau,\si},\,\bar{B}_\tau,\,\hat{B}_\tau,\,C_\tau)$ as
\begin{equation}\label{eq:3coeff}
\begin{split}
&\bar{A}_{\tau,\si}=\frac{1}{6}+(\alpha-\frac{\sqrt{3}}{6})
P_1(\si)+\alpha P_1(\tau)+\beta P_1(\tau)P_1(\si),\\
&\bar{B}_\tau=1-\tau,\;\hat{B}_\tau=1,\;C_\tau=\tau,
\end{split}
\end{equation}
then we get a two-parameter family of $3$-order symplectic csRKN
methods.

By using any numerical quadrature formula with order $p\geq3$ we can
get the classical symplectic RKN methods of order $3$, e.g.,
Gaussian quadrature with $2$ nodes, Radau-left or Radau-right
quadrature with $2$ nodes, Lobatto quadrature with $3$ nodes.  The
corresponding symplectic RKN methods obtained by using different
quadrature formulae are shown in Table \ref{exa:SRKN3}.
\end{exa}

\subsubsection{$4$-order symplectic integrators}

By inserting \eqref{sym_cond} with $\alpha_{(0,0)}=\frac{1}{6}$ into
the sixth order condition and using the orthogonality of the
Legendre polynomials, it gives
\begin{equation}
\begin{split}
&\int_0^1\int_0^1 \hat{B}_\tau C_\tau \bar{A}_{\tau,\si}d\si
d\tau\\
&=\int_0^1\int_0^1 \tau \bar{A}_{\tau,\si}d\si
d\tau\\
&=\int_0^1
(\frac{1}{2}P_0(\tau)+\frac{\sqrt{3}}{6}P_1(\tau))(\int_0^1\bar{A}_{\tau,\si}d\si)
d\tau\\
&=\int_0^1(\frac{1}{2}P_0(\tau)+\frac{\sqrt{3}}{6}P_1(\tau))(\frac{1}{6}+\alpha_{(1,0)}P_1(\tau)+
\sum\limits_{i>1}\alpha_{(i,0)}P_i(\tau))d\tau\\
&=\alpha_{(1,0)}\frac{\sqrt{3}}{6}+\frac{1}{12}=\frac{1}{8},
\end{split}
\end{equation}
which provides $\alpha_{(1,0)}=\frac{\sqrt{3}}{12}$.

Similarly, by exploiting the seventh order condition we obtain
$\alpha_{(0,1)}=-\frac{\sqrt{3}}{12}$, which coincides with the
symplectic condition
$\alpha_{(0,1)}-\alpha_{(1,0)}=-\frac{\sqrt{3}}{6}$ that given in
Theorem \ref{construct_scsRKN}. Therefore, if we require
$$\alpha_{(0,0)}=\frac{1}{6},\;\alpha_{(1,0)}=\frac{\sqrt{3}}{12},\;\alpha_{(0,1)}=-\frac{\sqrt{3}}{12},$$
then we can get a class of $4$-order symplectic csRKN methods.

\begin{table}
\[\ba{c|cc} \frac{3-\sqrt{3}}{6}&
\frac{1}{12}+\frac{1}{2}\alpha&
\frac{1-\sqrt{3}}{12}-\frac{1}{2}\alpha\\[2pt]
\frac{3+\sqrt{3}}{6} & \frac{1+\sqrt{3}}{12}-\frac{1}{2}\alpha&
\frac{1}{12}+\frac{1}{2}\alpha\\[2pt]
\hline & \frac{1}{4}+\frac{\sqrt{3}}{12}& \frac{1}{4}-\frac{\sqrt{3}}{12}\\[2pt]
\hline & \frac{1}{2}& \frac{1}{2} \ea \]\\
\small{\[\ba{c|ccc}0 & \frac{1+18\alpha}{54} &
\frac{-11+4\sqrt{6}+(-36+54\sqrt{6})\alpha}{216}&
\frac{-11-4\sqrt{6}+(-36-54\sqrt{6})\alpha}{216} \\[2pt]
\frac{6-\sqrt{6}}{10} &
\frac{28-3\sqrt{6}}{540}+\frac{(-15+15\sqrt{6})\alpha}{225}
&\frac{16+\sqrt{6}}{216}+\frac{(12-3\sqrt{6})\alpha}{36}
& \frac{98-53\sqrt{6}}{1080}+\frac{(-240+15\sqrt{6})\alpha}{900}\\[2pt]
\frac{6+\sqrt{6}}{10}
&\frac{28+3\sqrt{6}}{540}+\frac{(-15-15\sqrt{6})\alpha}{225}&
\frac{98+53\sqrt{6}}{1080}-\frac{(240+15\sqrt{6})\alpha}{900}&
\frac{16-\sqrt{6}}{216}+\frac{(12+3\sqrt{6})\alpha}{36}\\[2pt]
\hline & \frac{1}{9}& \frac{7+2\sqrt{6}}{36}& \frac{7-2\sqrt{6}}{36}\\[2pt]
\hline & \frac{1}{9}& \frac{16+\sqrt{6}}{36}&
\frac{16-\sqrt{6}}{36}\ea\]\\
\[\ba{c|ccc} \frac{4-\sqrt{6}}{10} & \frac{16-\sqrt{6}}{216}+\frac{(12+3\sqrt{6})\alpha}{36}&
\frac{62-43\sqrt{6}}{1080}-\frac{(240+15\sqrt{6})\alpha}{900}&
-\frac{8+3\sqrt{6}}{540}+\frac{-(15+15\sqrt{6})\alpha}{225}\\[2pt]
\frac{4+\sqrt{6}}{10} &
\frac{62+43\sqrt{6}}{1080}-\frac{(240-15\sqrt{6})\alpha}{900}&
\frac{16+\sqrt{6}}{216}+\frac{(12-3\sqrt{6})\alpha}{36}&
-\frac{8-3\sqrt{6}}{540}+\frac{-(15-15\sqrt{6})\alpha}{225}\\[2pt]
1 & \frac{43+2\sqrt{6}}{216}+\frac{-(6+9\sqrt{6})\alpha}{36}&
\frac{43-2\sqrt{6}}{216}+\frac{-(6-9\sqrt{6})\alpha}{36}& \frac{1+18\alpha}{54}\\[2pt]
\hline & \frac{9+\sqrt{6}}{36}& \frac{9-\sqrt{6}}{36}& 0\\[2pt]
\hline & \frac{16-\sqrt{6}}{36}& \frac{16+\sqrt{6}}{36}& \frac{1}{9}
\ea\]}\\
\[ \ba{c|ccc} 0 &
\frac{1+18\alpha+12\sqrt{5}\beta}{36} &
\frac{-1+6\sqrt{5}\beta}{18} & \frac{-2-18\alpha+12\sqrt{5}\beta}{36} \\[2pt]
\frac{1}{2} & \frac{5+6\sqrt{5}\beta}{72}  &
\frac{1-6\sqrt{5}\beta}{9}&\frac{-1+6\sqrt{5}\beta}{72}\\[2pt]
1 & \frac{2-9\alpha+6\sqrt{5}\beta}{18}& \frac{5+6\sqrt{5}\beta}{18}
&\frac{1+18\alpha+12\sqrt{5}\beta}{36}\\[2pt]
\hline & \frac{1}{6}& \frac{1}{3}& 0\\[2pt]
\hline & \frac{1}{6}& \frac{2}{3}& \frac{1}{6} \ea \]
\caption{Symplectic RKN methods of order 4 by using different
quadrature formulae. First: by Gaussian quadrature, Second: by
Radau-left quadrature ($\beta=0$), Third: by Radau-right quadrature
($\beta=0$), Fourth: by Lobatto quadrature.}\label{exa:SRKN4}
\end{table}

\begin{exa}
If we take the coefficients
$(\bar{A}_{\tau,\si},\,\bar{B}_\tau,\,\hat{B}_\tau,\,C_\tau)$ as
\begin{equation}\label{eq:4coeff}
\begin{split}
&\bar{A}_{\tau,\si}=\frac{1}{6}+\frac{\tau-\sigma}{2}+\alpha
P_1(\tau)P_1(\si)+\beta(P_2(\tau)+P_2(\si)),\\
&\bar{B}_\tau=1-\tau,\;\hat{B}_\tau=1,\;C_\tau=\tau,
\end{split}
\end{equation}
then we get a two-parameter family of $4$-order symplectic csRKN
methods.

By using any numerical quadrature formula with order $p\geq4$ we can
get the classical symplectic RKN methods of order $4$, e.g.,
Gaussian quadrature with $2$ nodes, Radau-left or Radau-right
quadrature with $3$ nodes, Lobatto quadrature with $3$ nodes.  The
corresponding symplectic RKN methods obtained by using different
quadrature formulae are shown in Table \ref{exa:SRKN4}. Note that,
actually, more free parameters can be taken into account.
\end{exa}

\subsubsection{$5$-order symplectic integrators}

By the previous discussions we have obtained that
$\alpha_{(0,0)}=\frac{1}{6},\,\alpha_{(1,0)}=\frac{\sqrt{3}}{12},
\,\alpha_{(0,1)}=-\frac{\sqrt{3}}{12}$, now we shall insert
\eqref{sym_cond} into the remaining order conditions.

For condition 9): We compute
\begin{equation*}
\begin{split}
&\int_0^1\int_0^1 \hat{B}_\tau C_\tau^2 \bar{A}_{\tau,\si}d\si
d\tau\\
&=\int_0^1\int_0^1 \tau^2 \bar{A}_{\tau,\si}d\si
d\tau\\
&=\int_0^1
(\frac{1}{3}P_0(\tau)+\frac{\sqrt{3}}{6}P_1(\tau)+
\frac{\sqrt{5}}{30}P_2(\tau))(\int_0^1\bar{A}_{\tau,\si}d\si)
d\tau\\
&=\int_0^1(\frac{1}{3}P_0(\tau)+\frac{\sqrt{3}}{6}P_1(\tau)
+\frac{\sqrt{5}}{30}P_2(\tau))(\alpha_{(0,0)}+\alpha_{(1,0)}P_1(\tau)+
\sum\limits_{i>1}\alpha_{(i,0)}P_i(\tau))d\tau\\
&=\frac{1}{3}\alpha_{(0,0)}+\frac{\sqrt{3}}{6}\alpha_{(1,0)}
+\frac{\sqrt{5}}{30}\alpha_{(2,0)}=\frac{1}{10},
\end{split}
\end{equation*}
which then gives $\alpha_{(2,0)}=\frac{\sqrt{5}}{60}$.

For condition 10): Since
\begin{equation*}
\begin{split}
&\int_0^1\int_0^1\int_0^1
\hat{B}_\tau\bar{A}_{\tau,\si}\bar{A}_{\tau,\rho}d\rho d\si
d\tau\\
&=\int_0^1(\int_0^1
\bar{A}_{\tau,\si}d\si)(\int_0^1\bar{A}_{\tau,\rho}d\rho)d\tau\\
&=\int_0^1(\alpha_{(0,0)}+\alpha_{(1,0)}P_1(\tau)+
\sum\limits_{i>1}\alpha_{(i,0)}P_i(\tau))^2d\tau\\
&=\alpha_{(0,0)}^2+\alpha_{(1,0)}^2+
\sum\limits_{i>1}\alpha_{(i,0)}^2=\frac{1}{20},
\end{split}
\end{equation*}
substituting the values of
$\alpha_{(0,0)},\,\alpha_{(1,0)},\,\alpha_{(2,0)}$ into it, then we
get $\sum\limits_{i>2}\alpha_{(i,0)}^2=0$ which means
$\alpha_{(i,0)}=0$ for all $i>2$.

For condition 11):  Since
\begin{equation*}
\begin{split}
&\int_0^1\int_0^1 \hat{B}_\tau C_\tau
\bar{A}_{\tau,\si}C_\si d\si d\tau\\
&=\int_0^1 \tau\big(\int_0^1\bar{A}_{\tau,\si}\si d\si\big) d\tau\\
&=\int_0^1(\frac{1}{2}P_0(\tau)+\frac{\sqrt{3}}{6}P_1(\tau))
\big[\int_0^1\Big(\alpha_{(0,0)}+\alpha_{(0,1)}P_1(\si)+\alpha_{(1,0)}P_1(\tau)\\
&\quad+\sum\limits_{i+j>1}\alpha_{(i,j)}P_i(\tau)P_j(\si)\Big)\big(\frac{1}{2}P_0(\si)
+\frac{\sqrt{3}}{6}P_1(\si)\big)d\si\big]d\tau\\
&=\int_0^1(\frac{1}{2}P_0(\tau)+\frac{\sqrt{3}}{6}P_1(\tau))
\big(\frac{1}{2}\alpha_{(0,0)}+\frac{1}{2}\alpha_{(1,0)}P_1(\tau)
+\frac{1}{2}\sum\limits_{i>1}\alpha_{(i,0)}P_i(\tau)\\
&\quad+\frac{\sqrt{3}}{6}\alpha_{(0,1)}+\frac{\sqrt{3}}{6}\sum\limits_{i>0}
\alpha_{(i,1)}P_i(\tau)\big) d\tau\\
&=\frac{1}{4}\alpha_{(0,0)}+\frac{\sqrt{3}}{12}\alpha_{(0,1)}
+\frac{\sqrt{3}}{12}\alpha_{(1,0)}+\frac{1}{12}\alpha_{(1,1)}=\frac{1}{30},
\end{split}
\end{equation*}
this gives $\alpha_{(1,1)}=-\frac{1}{10}$.

For condition 12):  By the very similar deduction as that for (9) we
get
\begin{equation}
\frac{1}{3}\alpha_{(0,0)}+\frac{\sqrt{3}}{6}\alpha_{(0,1)}+
\frac{\sqrt{5}}{30}\alpha_{(0,2)}=\frac{1}{60},
\end{equation}
which provides $\alpha_{(0,2)}=\frac{\sqrt{5}}{60}$.

For condition 13):  We have
\begin{equation*}
\begin{split}
&\int_0^1\int_0^1\int_0^1
\hat{B}_\tau\bar{A}_{\tau,\si}\bar{A}_{\si,\rho}d\rho d\si
d\tau\\
&=\int_0^1 (\int_0^1\bar{A}_{\tau,\si} d\tau)
(\int_0^1\bar{A}_{\si,\rho}d\rho)d\si\\
&=\int_0^1\Big(\alpha_{(0,0)}+\alpha_{(0,1)}P_1(\si)+
\sum\limits_{j>1}\alpha_{(0,j)}P_j(\si)\Big)
\Big(\alpha_{(0,0)}+\alpha_{(1,0)}P_1(\si)\\
&\quad+\sum\limits_{i>1}\alpha_{(i,0)}P_i(\si)\Big)d\si\\
&=\alpha_{(0,0)}^2+\alpha_{(0,1)}\alpha_{(1,0)}+
\sum\limits_{j>1}\alpha_{(0,j)}\alpha_{(j,0)}=\frac{1}{120},
\end{split}
\end{equation*}
After substituting the values of
$\alpha_{(0,0)},\,\alpha_{(0,1)},\,\alpha_{(1,0)},\,\alpha_{(2,0)}$
into it, it brings out that
$\sum\limits_{j>2}\alpha_{(0,j)}\alpha_{(j,0)}=0$. In addition, by
Theorem \ref{construct_scsRKN} we have
$\alpha_{(0,j)}=\alpha_{(j,0)}$ for $j>2$, then it follows that
$$\alpha_{(0,j)}=\alpha_{(j,0)}=0,\quad j>2.$$

In summary, for obtaining a symplectic csRKN method of order 5, we
should require that
\begin{equation}\label{coef: fifth}
\begin{split}
&\alpha_{(0,0)}=\frac{1}{6},\,\alpha_{(1,0)}=\frac{\sqrt{3}}{12},
\,\alpha_{(0,1)}=-\frac{\sqrt{3}}{12},\\
&\alpha_{(1,1)}=-\frac{1}{10},\,\alpha_{(2,0)}=\alpha_{(0,2)}
=\frac{\sqrt{5}}{60},\\
&\alpha_{(0,j)}=\alpha_{(j,0)}=0,\quad j>2,
\end{split}
\end{equation}
and other parameters $\alpha_{(i,j)}$ can be freely assigned.

\begin{table}
\[\ba{c|ccc} \frac{5-\sqrt{15}}{10} & \frac{2-90\alpha+60\beta}{135} &
\frac{19-6\sqrt{15}+180\alpha-120\beta}{270}&
\frac{62-15\sqrt{15}+120\beta}{540}\\[2pt]
\frac{1}{2} & \frac{19+6\sqrt{15}+180\alpha-120\beta}{432}&
\frac{1+15\beta}{27}&
\frac{19-6\sqrt{15}-180\alpha-120\beta}{432}\\[2pt]
\frac{5+\sqrt{15}}{10}& \frac{62+15\sqrt{15}+120\beta}{540}
&\frac{19+6\sqrt{15}-180\alpha-120\beta}{270}&\frac{2+90\alpha+60\beta}{135}\\[2pt]
\hline & \frac{5+\sqrt{15}}{36} & \frac{2}{9} & \frac{5-\sqrt{15}}{36}\\[2pt]
\hline & \frac{5}{18} & \frac{4}{9} & \frac{5}{18}
\ea\]\\
{\scriptsize\[\ba{c|ccc}0 & \frac{1-60\sqrt{15}\alpha}{270} &
\frac{-4-19\sqrt{6}+(240\sqrt{15}-180\sqrt{10})\alpha}{2160}&
\frac{-4+19\sqrt{6}+(240\sqrt{15}+180\sqrt{10})\alpha}{2160}\\[2pt]
\frac{6-\sqrt{6}}{10} &
\frac{181-36\sqrt{6}+(84\sqrt{15}-72\sqrt{10})\alpha}{2700}
&\frac{17+2\sqrt{6}+60\sqrt{15}\alpha}{540}
&\frac{301-136\sqrt{6}-(384\sqrt{15}-72\sqrt{10})\alpha}{2700}\\[2pt]
\frac{6+\sqrt{6}}{10}
&\frac{181+36\sqrt{6}+(84\sqrt{15}+72\sqrt{10})\alpha}{2700}&
\frac{301+136\sqrt{6}-(384\sqrt{15}+72\sqrt{10})\alpha}{2700}&
\frac{17-2\sqrt{6}+60\sqrt{15}\alpha}{540}\\[2pt]
\hline & \frac{1}{9}& \frac{7+2\sqrt{6}}{36}& \frac{7-2\sqrt{6}}{36}\\[2pt]
\hline & \frac{1}{9}& \frac{16+\sqrt{6}}{36}&
\frac{16-\sqrt{6}}{36}\ea\]\\
\[\ba{c|ccc} \frac{4-\sqrt{6}}{10} &
\frac{17-2\sqrt{6}-60\sqrt{15}\alpha}{540}&
\frac{211-104\sqrt{6}+(384\sqrt{15}+72\sqrt{10})\alpha}{2700}&
\frac{1+6\sqrt{6}-(84\sqrt{15}+72\sqrt{10})\alpha}{2700}\\[2pt]
\frac{4+\sqrt{6}}{10} &
\frac{211+104\sqrt{6}+(384\sqrt{15}-72\sqrt{10})\alpha}{2700}&
\frac{17+2\sqrt{6}-60\sqrt{15}\alpha}{540}&
\frac{1-6\sqrt{6}-(84\sqrt{15}-72\sqrt{10})\alpha}{2700}\\[2pt]
1 & \frac{536+79\sqrt{6}-(240\sqrt{15}+180\sqrt{10})\alpha}{2160}
&\frac{536-79\sqrt{6}-(240\sqrt{15}-180\sqrt{10})\alpha}{2160}
&\frac{1+60\sqrt{15}\alpha}{270}\\[2pt]
\hline & \frac{9+\sqrt{6}}{36}& \frac{9-\sqrt{6}}{36}& 0\\[2pt]
\hline & \frac{16-\sqrt{6}}{36}& \frac{16+\sqrt{6}}{36}& \frac{1}{9}
\ea\]} \caption{Symplectic RKN methods of order 5 by using different
quadrature formulae. First: by Gaussian quadrature, Second: by
Radau-left quadrature ($\beta=0$), Third: by Radau-right quadrature
($\beta=0$).}\label{exa:SRKN5}
\end{table}

\begin{exa}
If we take the coefficients
$(\bar{A}_{\tau,\si},\,\bar{B}_\tau,\,\hat{B}_\tau,\,C_\tau)$ as
\begin{equation}\label{eq:5coeff}
\begin{split}
&\bar{A}_{\tau,\si}=\sum\limits_{i+j\leq2}\alpha_{(i,j)}P_i(\tau)P_j(\sigma)+
\alpha \Big(P_1(\tau)P_2(\sigma)+P_2(\tau)P_1(\sigma)\Big)+\beta P_2(\tau)P_2(\sigma),\\
&\qquad\qquad\qquad\bar{B}_\tau=1-\tau,\;\hat{B}_\tau=1,\;C_\tau=\tau,
\end{split}
\end{equation}
where $\alpha_{(i,j)},\,i+j\leq2$ satisfy \eqref{coef: fifth} and
$\alpha,\,\beta$ are real parameters, then we get a two-parameter
family of $5$-order symplectic csRKN methods.

By using any numerical quadrature formula with order $p\geq5$ we can
get the classical symplectic RKN methods of order $5$, e.g.,
Gaussian quadrature with $3$ nodes, Radau-left or Radau-right
quadrature with $3$ nodes, Lobatto quadrature with $4$ nodes. The
corresponding symplectic RKN methods obtained by using different
quadrature formulae are shown in Table \ref{exa:SRKN5}.

Notice that based on 4-nodes Lobatto qudrature,  the $4$-stage
$5$-order symplectic RKN method with coefficients denoted by
$(\bar{A},\,\bar{b},\,b,\,c)$ is too lengthy to be shown in a
Butcher tableau, we present it as follows in use of Matlab notations
\begin{equation*}
  \begin{split}
\bar{A} =&[ \frac{1-60\sqrt{15}\alpha+150\beta}{360}, \frac{-5-3\sqrt{5}-(300\sqrt{3}-60\sqrt{15})\alpha-300\beta}{720},\\
&\frac{-5+3\sqrt{5}+(300\sqrt{3}+60\sqrt{15})\alpha-300\beta}{720}, \frac{2+75\beta}{180};\\
&\frac{29}{720}-\frac{11\sqrt{5}+(100\sqrt{3}-20\sqrt{15})\alpha+100\beta}{1200}, \frac{11+60\sqrt{3}\alpha+30\beta}{360},\\
&\frac{29-15\sqrt{5}+30\beta}{360}, -\frac{1}{720}+\frac{\sqrt{5}-(20\sqrt{15}+100\sqrt{3})\alpha-100\beta}{1200};\\
&\frac{29}{720}+\frac{11\sqrt{5}+(100\sqrt{3}+20\sqrt{15})\alpha-100\beta}{1200}, \frac{29+15\sqrt{5}+30\beta}{360},\\
&\frac{11-60\sqrt{3}\alpha+30\beta}{360}, -\frac{1}{720}-\frac{\sqrt{5}+(20\sqrt{15}-100\sqrt{3})\alpha+100\beta}{1200};\\
& \frac{17+75\beta}{180}, \frac{145+33\sqrt{5}-(60\sqrt{15}+300\sqrt{3})\alpha-300\beta}{720},\\
&\frac{145-33\sqrt{5}-(60\sqrt{15}-300\sqrt{3})\alpha-300\beta}{720}, \frac{1+60\sqrt{15}\alpha+150\beta}{360}],\\
\bar{b}=&[\frac{1}{12}, \frac{5+\sqrt{5}}{24},
\frac{5-\sqrt{5}}{24}, 0], \,b=[\frac{1}{12}, \frac{5}{12},
\frac{5}{12}, \frac{1}{12}], \,c=[0, \frac{5-\sqrt{5}}{10},
\frac{5+\sqrt{5}}{10}, 1].
  \end{split}
\end{equation*}
\end{exa}

\section{Diagonally implicit symplectic RKN methods}

It is known that diagonally implicit methods are more attractive
than fully implicit methods for the sake of time cost savings and
high efficiency in the numerical computations. A diagonally implicit
RKN method is a method (\ref{eq:grkn1}-\ref{eq:grkn3}) with
coefficient $(\bar{a}_{ij},\,\bar{b}_i,\,b_i,\,c_i)$ satisfying
\begin{equation}\label{diag imp:condi}
\bar{a}_{ij}=0,\,j> i.
\end{equation}
By setting more parameters and solving a linear algebraic system, it
is possible to get diagonally implicit symplectic integrators.

For example, let us take
\begin{equation} \label{diag imp:2coeff}
\begin{split}
&\bar{A}_{\tau,\si}=\alpha+(\beta-\frac{\sqrt{3}}{6})
P_1(\si)+\beta P_1(\tau)+\gamma P_1(\tau)P_1(\si),\\
&\bar{B}_\tau=1-\tau,\;\hat{B}_\tau=1,\;C_\tau=\tau,
\end{split}
\end{equation}
where three real parameters $\alpha,\,\beta,\,\gamma$ are included.
In such a case, by using Radau-left, Radau-right and Lobatto
quadrature with 2 nodes, respectively, it gives
\begin{equation}\label{fully:RL}
\ba{c|cc} 0
&\frac{1}{8}+\frac{1}{4}\alpha-\frac{\sqrt{3}}{2}\beta+\frac{3}{4}\gamma&
-\frac{1}{8}+\frac{3}{4}\alpha-\frac{\sqrt{3}}{2}\beta-\frac{3}{4}\gamma\\[2pt]
\frac{2}{3}&\frac{1}{8}+\frac{1}{4}\alpha-\frac{\sqrt{3}}{6}\beta-\frac{1}{4}\gamma&
-\frac{1}{8}+\frac{3}{4}\alpha+\frac{\sqrt{3}}{2}\beta+\frac{1}{4}\gamma\\[2pt]
\hline & \frac{1}{4}& \frac{1}{4}\\[2pt]
\hline & \frac{1}{4}& \frac{3}{4} \ea
\end{equation}
\begin{equation}\label{fully:RR}
\ba{c|cc} \frac{1}{3}
&\frac{1}{8}+\frac{3}{4}\alpha-\frac{\sqrt{3}}{2}\beta+\frac{1}{4}\gamma&
-\frac{1}{8}+\frac{1}{4}\alpha+\frac{\sqrt{3}}{6}\beta-\frac{1}{4}\gamma\\[2pt]
1&\frac{1}{8}+\frac{3}{4}\alpha+\frac{\sqrt{3}}{2}\beta-\frac{3}{4}\gamma
&-\frac{1}{8}+\frac{1}{4}\alpha+\frac{\sqrt{3}}{2}\beta+\frac{3}{4}\gamma\\[2pt]
\hline & \frac{1}{2}& 0\\[2pt]
\hline & \frac{3}{4}& \frac{1}{4} \ea
\end{equation}
\begin{equation}\label{fully:LO}
\ba{c|cc} 0 &
\frac{1}{4}+\frac{\alpha}{2}-\sqrt{3}\beta+\frac{3\gamma}{2}&
-\frac{1}{4}+\frac{\alpha}{2}-\frac{3\gamma}{2}\\[2pt]
1 & \frac{1}{4}+\frac{\alpha}{2}-\frac{3\gamma}{2}&
-\frac{1}{4}+\frac{\alpha}{2}+\sqrt{3}\beta+\frac{3\gamma}{2}\\[2pt]
\hline & \frac{1}{2}& 0\\[2pt]
\hline & \frac{1}{2}& \frac{1}{2} \ea
\end{equation}
and all of which are of order 2. If we impose the diagonally
implicit requirements \eqref{diag imp:condi}, then by eliminating
$\gamma$ we then get the following two-parameter families of
diagonally implicit symplectic integrators
\begin{equation}\label{diag:RL}
\ba{c|cc} 0 &\alpha-\sqrt{3}\beta&
0\\[2pt]
\frac{2}{3}&\frac{1}{6}&
-\frac{1}{6}+\alpha+\frac{\sqrt{3}}{3}\beta\\[2pt]
\hline & \frac{1}{4}& \frac{1}{4}\\[2pt]
\hline & \frac{1}{4}& \frac{3}{4} \ea
\end{equation}
\begin{equation}\label{diag:RR}
\ba{c|cc} \frac{1}{3} &\alpha-\frac{\sqrt{3}}{3}\beta& 0\\[2pt]
1&\frac{1}{2}&-\frac{1}{2}+\alpha+\sqrt{3}\beta\\[2pt]
\hline & \frac{1}{2}& 0\\[2pt]
\hline & \frac{3}{4}& \frac{1}{4} \ea
\end{equation}
\begin{equation}\label{diag:LO}
\ba{c|cc} 0 & \alpha-\sqrt{3}\beta&0\\[2pt]
1 & \frac{1}{2}&
-\frac{1}{2}+\alpha+\sqrt{3}\beta\\[2pt]
\hline & \frac{1}{2}& 0\\[2pt]
\hline & \frac{1}{2}& \frac{1}{2} \ea
\end{equation}

We point out that, if we further require the following conditions
for explicit RKN schemes (as a very special type of diagonally
implicit schemes)
\begin{equation}\label{expl:condi}
\bar{a}_{ij}=0,\,j\geq i,
\end{equation}
then we derive a linear algebraic system in terms of
$\alpha,\,\beta,\,\gamma$ for each case, which can be easily solved
and their solutions are
\begin{itemize}
  \item[(a)]
  $\alpha=\frac{1}{8},\,\beta=\frac{\sqrt{3}}{24},\,\gamma=-\frac{1}{8}$
  for \eqref{fully:RL};
  \item[(b)] $\alpha=\frac{1}{8},\,\beta=\frac{\sqrt{3}}{8},\,\gamma=-\frac{1}{8}$
  for \eqref{fully:RR};
  \item[(c)] $\alpha=\frac{1}{4},\,\beta=\frac{\sqrt{3}}{12},\,\gamma=-\frac{1}{12}$
  for \eqref{fully:LO}.
\end{itemize}
Consequently, by substituting them into \eqref{fully:RL},
\eqref{fully:RR} and \eqref{fully:LO}, it yields the following three
explicit symplectic integrators
\[\ba{c|cc} 0 &0&0\\[2pt]\frac{2}{3}&\frac{1}{6}&
0\\[2pt]
\hline & \frac{1}{4}& \frac{1}{4}\\[2pt]
\hline & \frac{1}{4}& \frac{3}{4} \ea\qquad
\ba{c|cc} \frac{1}{3} & 0 & 0\\[2pt]
1&\frac{1}{2}&0\\[2pt]
\hline & \frac{1}{2}& 0\\[2pt]
\hline & \frac{3}{4}& \frac{1}{4} \ea \qquad
\ba{c|cc} 0 & 0& 0\\[2pt]
1 & \frac{1}{2}&0\\[2pt]
\hline & \frac{1}{2}& 0\\[2pt]
\hline & \frac{1}{2}& \frac{1}{2} \ea\] It is worth mentioning that
the right-hand tableau provides the well-known St\"{o}rmer-Verlet
scheme, which has been the most widely used scheme by far in many
fields such as astronomy, molecular dynamics and so on
\cite{hairerlw06gni}.

More higher order diagonally implicit symplectic integrators with
more stages can be constructed by the same techniques. For instance,
a 3-stage 4-order integrator can be obtained by imposing the
diagonally implicit conditions to the last table shown in Table
\ref{exa:SRKN4}, which means we should take
$\alpha=0,\,\beta=\frac{\sqrt{5}}{30}$. However, it is difficult to
construct higher order explicit symplectic integrators along the
same line, as an explicit symplectic integrators generally can be
completely determined by the nodes $c_i$ of a quadrature formula
\cite{okunbors92ecm}, which means we can not get explicit symplectic
integrators by using a given quadrature formula (e.g. the commonly
used Gaussian, Radau and Lobatto type quadrature). However, it is
possible that an explicit symplectic integrator stems from a csRKN
method by using the associated quadrature formula.

\section{Concluding remarks}

We propose the continuous-stage Runge-Kutta-Nystr\"{o}m (csRKN)
methods for solving second order ordinary differential equations in
this paper,  and the construction of symplecticity-preserving
integrators for separable Hamiltonian systems is investigated. It is
shown that the construction of csRKN methods heavily relies on the
Legendre polynomial expansion technique coupling with the symplectic
conditions and order conditions. Based on symplectic csRKN methods,
several new classes of symplectic RKN methods are obtained in use of
the quadrature formulae, and some free parameters are included in
the Butcher tableaux. It is interesting to see that we can use
different quadrature formulae to get different RKN schemes even for
the same csRKN coefficients. In addition, we can set many free
parameters to get more methods, though we only provides the methods
with two free parameters. We have only considered the methods up to
order 5 in this paper, but it is possible to construct more higher
order methods with the same technique. It is stressed that our
approach seems more easier to construct RKN type methods than the
traditional approaches which in general have to solve the tedious
nonlinear algebraic equations that stem from the order conditions
with many unknown coefficients.

\section*{Acknowledgements}

The first author was supported by the Foundation of the NNSFC
(11401055) and the Foundation of Education Department of Hunan
Province (15C0028). The second author was supported by the
foundation of NSFC(11201125),  the foundation of department of
education of Henan province(12B110010), the young growth foundation
of Henan Polytechnic university (B2390),  and State Key Laboratory
of Scientific and Engineering Computing, CAS.

\end{document}